\renewcommand{\Re}{\operatorname{Re}}
\newcommand{\R}{\mathbb R}
\newcommand{\C}{\mathbb C}
\newtheorem{thm}{Theorem}[section]
\newtheorem{prop}[thm]{Proposition}
\newtheorem{lemma}[thm]{Lemma}
\newtheorem{rem}[thm]{Remark}
\newtheorem{coro}[thm]{Corollary}
\numberwithin{equation}{section}
\title{Threshold solutions for the $3d$ cubic INLS: \\ the energy-subcritical case}
\author{Luccas Campos, Luiz Gustavo Farah, and Jason Murphy} 
\date{}
\begin{document}
\maketitle

%\scrollmode
%\title{Threshold solutions for the 3D cubic inhomogeneous nonlinear Schrödinger equation}
%\date{}
%\maketitle
%
%%{%                            % inside a group, so it only affects TOC
%%\addtolength{\hoffset}{3cm} % additional 1/2in, but reduces right  
%%\setlength\textwidth{13cm}  % this only adjusts the running head
%%\hsize 13cm%                % TeX, more than LaTeX, but adjusts TOC  
%%\tableofcontents\clearpage}%  % \clearpage works like \par, so last page
%                               % of TOC typesets OK
%\makeatletter{\renewcommand*{\@makefnmark}{}
%\footnotetext{\textit{2000 Mathematics Subject classification:} 35Q55, 35P25, 35B40.}\makeatother}
%\makeatletter{\renewcommand*{\@makefnmark}{}
%\footnotetext{\textit{Keywords:} Nonlinear Schrödinger-type equations, asymptotic behavior, scattering, blow-up.}\makeatother}
\begin{abstract}
%We classify the dynamics of $H^1$-solutions to cubic inhomogeneous nonlinear Schr\"odinger equations of the form $i\partial_t u + \Delta u + |x|^{-b}|u|^2 u = 0$ at the ground state threshold. By revisiting the modulation analysis of [L. Campos and J. Murphy, \emph{SIAM J. Math. Anal.}, \textbf{55} (2023), pp. 3807--3843] and replacing pointwise bounds with Strichartz estimates, we extend the classification, previously restricted to $b\in(0,\tfrac12)$, to the full energy-subcritical regime $b\in(0,1)$. This strategy is expected to carry over to other dispersive equations with singular potentials.
\noindent	
We revisit the work [L. Campos and J. Murphy, \emph{SIAM J. Math. Anal.}, \textbf{55} (2023), pp. 3807--3843], which classified the dynamics of $H^1$ solutions at the ground state threshold for cubic inhomogeneous nonlinear Schr\"odinger equations of the form $i\partial_t u + \Delta u + |x|^{-b}|u|^2 u = 0$ in the range $b\in(0,\tfrac12)$. By modifying the modulation analysis and using Strichartz estimates in place of pointwise bounds, we extend the result to the full energy-subcritical range $b\in(0,1)$.  This strategy is expected to carry over to other dispersive equations with singular potentials.

%We consider the cubic inhomogeneous nonlinear Schr\"odinger equation $i\partial_t u + \Delta u + |x|^{-b}|u|^2 u = 0$ with initial data in $H^1(\R^3)$. In a previous work [L. Campos and J. Murphy, \emph{SIAM J. Math. Anal.}, \textbf{55} (2023), pp. 3807--3843], solutions at the ground state threshold were classified under the restriction $b\in(0,\tfrac12)$.  By revisiting the modulation theory and using Strichartz estimates in place of pointwise bounds, we extend this result to the full energy-subcritical range $b\in(0,1).$  This strategy may be applicable in other singular contexts.

%We show the existence of special solutions $Q^\pm$ with $\mathcal{ME}(Q^\pm)= \mathcal{ME}(Q)$, to the inhomogeneous nonlinear Schr\"odinger equation in the energy-critical case and characterize the solutions at the energy threshold.
\end{abstract}

\section{Introduction}

We consider the initial-value problem for inhomogeneous nonlinear Schr\"odinger (INLS) equations of the form
\begin{equation}\label{INLS}
\begin{cases} i\partial_t u + \Delta u + |x|^{-b}|u|^2 u = 0, \\
u(0,\cdot)=u_0(\cdot)\in H^1(\R^3),
\end{cases}
\end{equation}
where $u:\R_t\times\R_x^3\to\C$ and $b\in(0,1)$. 

For $H^1$-solutions, the INLS preserves the \emph{mass} and \emph{energy} defined by
$$
M[u(t)] := \int |u(t,x)|^2 \, dx\quad\text{and}\quad
E[u(t)] := \tfrac12 \int |\nabla u(t,x)|^2 \, dx 
- \tfrac14 \int |x|^{-b} |u(t,x)|^{4} \, dx,
$$
respectively.  The INLS equation is also invariant under a natural scaling transformation: if $u$ is a solution, then so is
$$
u_\lambda(t,x) = \lambda^{\frac{2-b}{2}} u(\lambda^2 t, \lambda x)
$$
for any $\lambda>0$. Under this scaling, the $\dot{H}^{s_c}(\mathbb{R}^3)$-norm with the {regularity}
$$
s_c = \tfrac{1+b}{2}
$$
remains invariant, that is, $\|u_\lambda(0)\|_{\dot{H}_x^{s_c}} = \|u(0)\|_{\dot{H}_x^{s_c}}$.  This notion of critical regularity for \eqref{INLS} plays a central role in the study of well-posedness, blow-up, and scattering of solutions.  The assumptions on $b$ make \eqref{INLS} an \emph{intercritical} equation.  Indeed, $b\in(0,1)$ corresponds to $s_c\in(\tfrac12,1)$. 

In this work, we are interested in so-called \emph{threshold} solutions satisfying
\begin{equation}\label{threshold}
M[u_0]^{1-s_c} E[u_0]^{s_c} = M[Q]^{1-s_c}E[Q]^{s_c},
\end{equation}
where $Q$ is the ground state, i.e. the unique nonnegative, radial $H^1$ solution to the elliptic equation
\begin{equation}\label{EqQ}
- Q + \Delta Q + |x|^{-b} |Q|^2 Q = 0.
\end{equation}
The ground state generates the standing wave solution $u(t,x) = e^{it} Q(x)$, a global, non-scattering solution to the INLS equation.

The classification of threshold behaviors for nonlinear dispersive PDE has received significant attention in recent years. In the context of the classical NLS, the pioneering work \cite{DM09} established the first results for the energy-critical problem. For related studies on this topic in wave and NLS-type equations, we refer the reader to \cite{DM08, LZ09, DR10, KMV21, M21, AI22, CFR22, YZZ22, DLR22, MMZ23, AM23, AMZ23}. 

The first analysis of threshold solutions for the INLS model was carried out in \cite{CM23}, where the first and third authors obtained the existence of two particular solutions to \eqref{INLS} (under the condition $b\in(0,\tfrac12)$):
\begin{thm}[Existence of particular solutions, \cite{CM23}]\label{thm:Qpm}
There exist forward-global radial solutions $Q^{\pm}$ to the equation \eqref{INLS} with
\[
M[Q^{\pm}] = M[Q] \quad \text{and} \quad E[Q^{\pm}] = E[Q],
\]
satisfying
\[
\|Q^{\pm}(t) - e^{it}Q\|_{H_x^{1}} \lesssim e^{-ct},
\]
for some $c>0$ and all $t>0$.

The solution $Q^{+}$ satisfies
\[
\|\nabla Q^{+}(0)\|_{L_x^{2}} > \|\nabla Q\|_{L_x^{2}},
\]
blows up in finite negative time, and satisfies $xQ^{+}\in L_x^{2}(\mathbb{R}^{3})$.

The solution $Q^{-}$ is global, satisfies
\[
\|\nabla Q^{-}(0)\|_{L_x^{2}} < \|\nabla Q\|_{L_x^{2}},
\]
and scatters in $H_x^{1}(\mathbb{R}^{3})$ as $t\to -\infty$.
\end{thm}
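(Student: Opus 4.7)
The plan is to follow the Duyckaerts-Merle framework \cite{DM09} as adapted to \eqref{INLS} in \cite{CM23}, constructing $Q^\pm$ as perturbations of the standing wave $e^{it}Q$ along the unstable and stable directions of the linearized flow. Setting $u(t,x) = e^{it}(Q(x) + h(t,x))$ converts \eqref{INLS} into a perturbation equation of the form $i\partial_t h = \mathcal{L} h + R(h)$, where $\mathcal{L}$ is the (non-selfadjoint) linearization of INLS at $Q$ and $R(h)$ collects the terms nonlinear in $h$. The first task is to establish the spectral picture of $\mathcal{L}$ on the radial subspace: continuous spectrum on $i\mathbb{R}$, together with exactly one conjugate pair of simple real eigenvalues $\pm e_0$, $e_0>0$, whose eigenfunctions $\mathcal{Y}^\pm \in H^1(\mathbb{R}^3)$ enjoy good spatial decay. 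This part is essentially independent of the range of $b$ and can be imported from \cite{CM23}.

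For each choice of sign, I would then build formal approximate solutions $h_K^\pm(t) = \sum_{k=1}^K e^{-k e_0 t}\, Z_k^\pm$, with $Z_1^\pm = \mathcal{Y}^\pm$ and the higher profiles $Z_k^\pm$ determined inductively by solving $(\mathcal{L} \mp i k e_0)Z_k^\pm = F_k^\pm$, where $F_k^\pm$ is assembled from lower-order contributions to the nonlinearity $R$. For $K$ sufficiently large, $u_K^\pm(t,x) = e^{it}(Q(x) + h_K^\pm(t,x))$ solves \eqref{INLS} up to an error of size $e^{-(K+1)e_0 t}$. The decisive step is then a fixed-point argument on a half-line $[T_0,\infty)$ (with $T_0$ large) for the remainder $r = u - u_K^\pm$, carried out in a Strichartz-type space and producing $\|r(t)\|_{H_x^1} \lesssim e^{-(K+\tfrac12)e_0 t}$. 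Combined with the decay of $h_K^\pm$, this yields the exponential convergence $\|Q^\pm(t) - e^{it}Q\|_{H_x^1} \lesssim e^{-ct}$ stated in the theorem.

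The main obstacle, and the central change relative to \cite{CM23}, is controlling the nonlinear interaction with the singular factor $|x|^{-b}$ when $b$ approaches $1$. In \cite{CM23} the contraction was closed using pointwise-in-space estimates on the perturbation, which required the stronger decay of the weight available only for $b<\tfrac12$. Here I would instead run the contraction entirely in Strichartz spaces, treating the singular weight through a combination of inhomogeneous Strichartz estimates, Hardy inequalities, and radial Sobolev embeddings, and exploiting the built-in exponential decay in $t$ of $h_K^\pm$ to absorb the small losses introduced by these estimates and close the argument uniformly across $b\in(0,1)$.

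Once $Q^\pm$ are constructed, mass and energy conservation together with the $H^1$-convergence $Q^\pm(t)\to e^{it}Q$ as $t\to+\infty$ give $M[Q^\pm]=M[Q]$ and $E[Q^\pm]=E[Q]$. Choosing the signs of the leading coefficients appropriately places $\|\nabla Q^\pm(0)\|_{L_x^2}$ on opposite sides of $\|\nabla Q\|_{L_x^2}$; combined with the threshold identity \eqref{threshold}, this puts $Q^+(0)$ on the blowup side and $Q^-(0)$ on the scattering side of the subthreshold/superthreshold dichotomy. The localization $xQ^+ \in L_x^2(\mathbb{R}^3)$ is inherited from the spatial decay of $Q$ and $\mathcal{Y}^+$, and then the virial identity forces finite-time blow-up of $Q^+$ in negative time, while the global existence and scattering of $Q^-$ as $t\to-\infty$ follow from the subthreshold scattering theory for radial $H^1$ solutions of \eqref{INLS}.
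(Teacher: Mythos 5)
Your first two paragraphs reproduce, in outline, the Duyckaerts--Merle scheme (spectral analysis of the linearization at $e^{it}Q$, approximate solutions built from the unstable/stable eigenmodes, and a fixed point for the remainder on a half-line), which is indeed the construction used in \cite{CM23}; the present paper does not reprove this theorem but cites it, and its own contribution lies elsewhere. This is where your third paragraph goes astray: you locate the obstruction for $b\ge\tfrac12$ in the contraction argument for the remainder and propose to redo that contraction in Strichartz spaces. According to the paper, the restriction $b\in(0,\tfrac12)$ in \cite{CM23} stems \emph{entirely} from the modulation theory, namely the pointwise bound on $\dot\alpha$, which requires $\nabla\Delta Q\in L^2(\mathbb{R}^3)$ and hence $b<\tfrac12$ (see \eqref{DnQ} and Remark \ref{RCM23}). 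The paper's fix is to abandon the pointwise bound on $\dot\alpha$ and instead control the increment $|\alpha(s)-\alpha(t)|$ directly, by proving Strichartz bounds for the modulation remainder $g$ (Lemma \ref{Strg}, Corollary \ref{Lintg}, Lemma \ref{Lemalpha}), leaving the fixed-point construction of $Q^\pm$ untouched. So the Strichartz idea is the right instinct, but it must be applied to the modulation parameter, not to the construction of the special solutions.

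The more serious gap is in your final paragraph. The solutions $Q^\pm$ satisfy $M[Q^\pm]=M[Q]$ and $E[Q^\pm]=E[Q]$, i.e.\ they sit \emph{exactly at} the threshold \eqref{threshold}, not below it, so neither the subthreshold scattering theory nor the standard virial (Glassey) blow-up criterion applies to them. At the threshold, the virial identity only gives $\tfrac{d^2}{dt^2}\int|x|^2|Q^+|^2\,dx<0$ without a uniform negative upper bound, and a positive concave function need not become negative in finite time; likewise, backward scattering of $Q^-$ cannot be quoted from "subthreshold" results. In \cite{CM23} (following Duyckaerts--Roudenko) both properties are obtained through the threshold-specific machinery: the modulation analysis, the exponential convergence $\delta(t)\lesssim e^{-ct}$ as $t\to+\infty$, and the ensuing rigidity/virial arguments -- precisely the part of the argument where the restriction $b<\tfrac12$ entered and which the present paper repairs (Propositions \ref{ModThe}, \ref{VirialCon}, and \ref{final}) in order to reach the full range $b\in(0,1)$. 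As written, your proposal treats these conclusions as routine consequences of off-threshold theory, which they are not.
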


Moreover, they classified all possible threshold solutions.

\begin{thm}[Classification of threshold dynamics, \cite{CM23}]\label{thm:threshold}
Let $u_{0}\in H^{1}(\mathbb{R}^3)$ satisfying \eqref{threshold} and $u$ be the corresponding solution of \eqref{INLS}. We have the following

\begin{itemize}
\item[(i)] If
\[
\|u_{0}\|_{L_x^{2}}^{1-s_{c}} \|\nabla u_{0}\|_{L_x^{2}}^{s_{c}}
   < \|Q\|_{L_x^{2}}^{1-s_{c}} \|\nabla Q\|_{L_x^{2}}^{s_{c}},
\]
then $u$ either scatters as $t\to\pm\infty$ or $u=Q^{-}$ up to the symmetries of the equation.

\item[(ii)] If
\[
\|u_{0}\|_{L_x^{2}}^{1-s_{c}} \|\nabla u_{0}\|_{L_x^{2}}^{s_{c}}
   = \|Q\|_{L_x^{2}}^{1-s_{c}} \|\nabla Q\|_{L_x^{2}}^{s_{c}},
\]
then $u = e^{it}Q$ up to the symmetries of the equation.

\item[(iii)] If
\[
\|u_{0}\|_{L_x^{2}}^{1-s_{c}} \|\nabla u_{0}\|_{L_x^{2}}^{s_{c}}
   > \|Q\|_{L_x^{2}}^{1-s_{c}} \|\nabla Q\|_{L_x^{2}}^{s_{c}}
\]
and $u_{0}$ is radial or $xu_{0}\in L_x^{2}(\mathbb{R}^{3})$, then $u$ either blows up in finite positive and negative times or $u=Q^{+}$ up to the symmetries of the equation.
\end{itemize}
\end{thm}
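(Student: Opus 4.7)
The plan is to argue by modulation analysis near the ground-state orbit, in the spirit of Duyckaerts--Merle and the subsequent threshold literature, with modifications tailored to the singular inhomogeneity $|x|^{-b}$. By the threshold identity \eqref{threshold} and the variational characterization of $Q$, the quantity $\|u(t)\|_{L^2}^{1-s_c}\|\nabla u(t)\|_{L^2}^{s_c}$ cannot cross the critical value $\|Q\|_{L^2}^{1-s_c}\|\nabla Q\|_{L^2}^{s_c}$, so the trichotomy in the statement is dynamically stable and one may treat the three regimes separately. In each case the first step is a dichotomy: if the solution eventually leaves a small tubular neighborhood of the ground-state orbit, then the known below-threshold scattering result gives (i), the rigidity statement gives (ii) trivially, and the above-threshold blowup result (using radiality or $xu_0\in L^2$) gives (iii); the substantive work is when $u(t)$ stays uniformly close to the orbit on a half-line.

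In that regime I would write a modulated decomposition $u(t,x)=e^{i\theta(t)}\bigl[Q(x)+h(t,x)\bigr]$, imposing orthogonality conditions on $h$ that kill the kernel of the linearized operator $\mathcal{L}$ about $Q$ (phase direction, and under radiality one need not modulate by translation since $|x|^{-b}$ breaks translation invariance). Standard spectral analysis of $\mathcal{L}$ produces a pair of real eigenvalues $\pm e_0$ with eigenfunctions $\mathcal{Y}^{\pm}$; the unstable/stable directions are precisely what generate the special solutions $Q^{\pm}$ of Theorem~\ref{thm:Qpm}. Setting $\alpha^{\pm}(t):=\langle h(t),\mathcal{Y}^{\mp}\rangle$, coercivity of the linearized energy transverse to the kernel together with conservation of mass and energy yields
\[
\|h(t)\|_{H^1}^2\lesssim |\alpha^{+}(t)|^{2}+|\alpha^{-}(t)|^{2}+\text{higher-order terms}.
\]

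The modulation equations take the schematic form $\dot\alpha^{\pm}=\pm e_0\,\alpha^{\pm}+\mathcal{O}(\|h\|_{H^1}^{2})$, with the error absorbing all nonlinear interactions with the potential $|x|^{-b}$. A soft ODE argument then splits into two possibilities: either $\alpha^{+}\to 0$ exponentially (and similarly in backward time if the solution stays close there), in which case a fixed-point/uniqueness argument in the spirit of \cite{CM23} identifies $u$ with one of $e^{it}Q$ or $Q^{\pm}$ up to symmetries; or $|\alpha^{+}(t)|$ grows at rate $e^{e_0 t}$, forcing $u$ to exit the tubular neighborhood in finite time and reducing to the exit scenario handled in the first step. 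Combining these reductions with the a priori side of the $\|\nabla\cdot\|_{L^2}$ ladder fixed by the hypothesis gives the three cases (i), (ii), (iii).

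The hard part, and the whole reason for the extension from $b\in(0,\tfrac12)$ to $b\in(0,1)$, is closing the nonlinear error estimates when the inhomogeneity is too singular to permit the pointwise bounds used in \cite{CM23}. Terms of the form $|x|^{-b}(|Q|^{2}h+Q^{2}\bar h+\cdots)$ need to be controlled uniformly on long time intervals, and for $b\in[\tfrac12,1)$ the weight $|x|^{-b}$ is not in any local $L^{\infty}$-type space compatible with the cubic $H^1$ bookkeeping. My approach, as suggested by the abstract, would be to estimate the Duhamel remainder in suitable Strichartz norms, combining weighted Sobolev / Hardy inequalities with the local smoothing afforded by the free propagator to dominate $|x|^{-b}$-weighted nonlinearities by $\|h\|_{H^{1}}$ times a small spacetime norm. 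Once the modulation equations and the bootstrap for $\|h\|_{H^1}$ are closed in Strichartz-type norms, the exponential dichotomy for $\alpha^{\pm}$ and the subsequent rigidity argument proceed as in the $b<\tfrac12$ analysis, and the three-way classification follows.
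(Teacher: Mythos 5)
Your overall roadmap (modulation near the orbit, spectral pair $\pm e_0$, classification of exponentially converging solutions, sub-threshold scattering machinery below and virial/blowup arguments above) is the right family of arguments, and you correctly guessed from the abstract that Strichartz-type estimates must replace pointwise bounds for $b\in[\tfrac12,1)$. But there are two genuine gaps. First, your opening ``dichotomy'' is not valid: if the solution exits a small tubular neighborhood of the orbit, you cannot invoke ``the known below-threshold scattering result'' or ``the above-threshold blowup result,'' because the solution sits \emph{exactly} at the mass-energy threshold \eqref{threshold}; those results require strict inequality and never apply here. In the actual scheme (following \cite{CM23}, after Duyckaerts--Merle/Duyckaerts--Roudenko), case (i) requires a concentration-compactness step showing that a non-scattering threshold solution has precompact orbit modulo phase, and both cases (i) and (iii) rest on global-in-time virial \emph{integral} estimates of the form $\int_{t}^{\infty}\delta(\tau)\,d\tau\lesssim e^{-ct}$ for forward-global solutions (Proposition~\ref{VirialCon}), not on a one-pass/exit argument. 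Without compactness or the integrated virial bound, your reduction ``either stays close on a half-line or exits and we are done'' does not close.

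Second, you locate the $b\ge\tfrac12$ difficulty in the wrong place and leave the remedy unspecified. The coercivity $\|h\|_{H^1}\sim|\alpha|\sim\delta$ and the bound $|\dot\zeta|\lesssim\delta$ hold for all $b\in(0,1)$; the obstruction is solely the modulation estimate for $\dot\alpha$, whose natural form pairs $\nabla g$ with $\nabla\Delta Q$ and $|g|^3$ with $|x|^{-2b}Q$, and these belong to $L^2_x$ only for $b<\tfrac12$ and $b<\tfrac34$ respectively (Remark~\ref{RCM23}). The key idea of the paper is that no pointwise bound on $\dot\alpha$ is needed: one only needs $\int_t^s|\dot\alpha(\tau)|\,d\tau$, and this is estimated by placing $\nabla\Delta Q$ in $L^{4/3}_x$ (available for all $b<1$) against $L^2$-admissible space-time norms of $\nabla g$, which in turn are controlled by $\delta(t)+e^{-ct}$ through a Strichartz estimate for the equation \eqref{solug} satisfied by $g$, bootstrapped using the virial bound \eqref{intdelta} (Lemma~\ref{Strg}, Corollary~\ref{Lintg}, Lemma~\ref{Lemalpha}). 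Your proposal gestures at ``Strichartz norms plus local smoothing'' to close a bootstrap for $\|h\|_{H^1}$, but never formulates the integrated-in-time reformulation of the $\dot\alpha$ estimate, nor the Strichartz bound on $g$ itself, which is the technical heart of the extension to $b\in(0,1)$ (local smoothing is not needed). As written, the plan would stall at exactly the point where the restriction $b<\tfrac12$ entered in \cite{CM23}.
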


We refer to solutions as in Theorem~\ref{thm:threshold}\textit{(i)} as \emph{constrained} solutions, while solutions as in Theorem~\ref{thm:threshold}\textit{(iii)} are called \emph{unconstrained}. 

In this work, we refine the analysis of \cite{CM23} in order to relax the restriction on $b$ to the full energy-subcritical range $b\in(0,1)$.

A key ingredient in the proof of the above results is the modulation analysis, which consists of studying solutions of \eqref{INLS} during the times that they remain close to the orbit of the ground state solution $e^{it}Q$. To this end, we introduce the functional
$$
\delta(t) = \left|\int |\nabla u(t,x)|^{2}\,dx - \int |\nabla Q(x)|^{2}\,dx\right|.
$$
By rescaling, we may consider $u_0 \in H^1$ satisfying $M[u_0]=M[Q]$ and $E[u_0]=E[Q]$.  We let $u:I\times \mathbb{R}^3\to \mathbb{C}$ be the corresponding threshold solution to \eqref{INLS}, and for $\delta_0>0$ we define
$$
I_0=\{t\in I: \delta(t)<\delta_0\}.
$$
If $\delta_{0}$ is chosen sufficiently small, the Implicit Function Theorem guarantees the existence of a function $\zeta:I_{0}\to\mathbb{R}$ such that
$$
g(t) = g_1(t)+ig_2(t)=e^{-i(\zeta(t)+t)}u(t)- Q, \quad \mbox{with} \quad (g_2(t), Q)=0.
$$
Here $(\cdot,\cdot)$ denotes the standard $L^2$ inner product.  By further decomposing $g(t)$ as
\begin{equation}\label{g2}
g(t)=\alpha(t)Q+h(t), \quad \mbox{where} \quad \alpha(t)=(g_1(t), \Delta Q)/(Q, \Delta Q),
\end{equation}
we obtain orthogonality conditions on $h(t)$ that play a key role in the analysis.

To proceed, we recall that important decay properties of the ground state solution were established in \cite[Proposition 2.2, Theorem 2.2, Lemma 2.7, and Lemma D.1]{GS08}. In particular, for $b \in (0,1)$, uniformly in $x\in \R^3$, we have
\begin{equation}\label{DecayQ}
\left|\nabla Q(x)\right| \lesssim \left|Q(x)\right| \sim (1+|x|)^{-1} e^{-|x|}.
\end{equation}
%and moreover
%\begin{equation}\label{GSQ}
%|\cdot|\nabla Q \in L_x^{\infty}(\mathbb{R}^3).
%\end{equation}
%In particular, for any $q>0$ and $1\leq p < \infty$, we have
%\begin{equation}\label{IntbQ}
%|x|^{-a}Q^q \in L_x^p(\mathbb{R}^3) \quad \mbox{if} \quad 0<a<3/p.
%\end{equation}

We then note that the restriction $b\in(0,\tfrac12)$ appearing in \cite{CM23} stems entirely from estimating the time derivative of the parameter $\alpha$.  From the equation \eqref{EqQ}, we derive that
\begin{equation}\label{DnQ}
|\nabla \Delta Q| \lesssim |x|^{-b-1}Q^3+|x|^{-b}Q^2|\nabla Q|+|\nabla Q|
\end{equation}
and thus \eqref{DecayQ}
%--\eqref{IntbQ} 
yields that $\nabla \Delta Q \in L^2(\mathbb{R}^3)$ precisely in the range $b\in(0,\tfrac12)$.  Using this fact, the authors of \cite{CM23} were able to obtain suitable control over the parameter $\dot\alpha$ in this range.  (For further discussion, see Proposition \ref{ModThe} and Remark \ref{RCM23} below.)

In this note we present an alternative approach.  The key observation is that the pointwise bound on $|\dot\alpha|$ is ultimately integrated to obtain a uniform bound on $|\alpha(t)-\alpha(s)|$.  Thus it is only necessary to obtain an estimate on the time integral of $|\dot \alpha|$.  This, in turn, can be achieved by deriving suitable Strichartz-type estimates for the function $g$.  To prove such estimates, we make use of the equation satisfied by this function (see \eqref{solug} below).  This approach is less sensitive to the precise decay and smoothness properties of $Q$, and in particular it can be carried out to fruition in the full range $b\in(0,1)$.  We remark that this approach is in a similar spirit as the analysis of Dodson \cite{Dodson2024} on the characterization of blowup solutions for mass-critical NLS.

The rest of this paper is organized as follows. In Section \ref{MTR}, we review the modulation theory and establish a virial estimate. In Section~\ref{LpLqg} we establish $L_t^p L_x^q$ estimates for the function $g$. Finally, in Section \ref{Ualpha} we derive uniform estimates for $|\alpha(s) - \alpha(t)|$ in the full range $b\in(0,1)$.  Throughout the paper, we rely on standard notation and tools such as the $\lesssim$ notation and notation for space-time norms, as well as Strichartz estimates.  We refer the reader to \cite{CM23} for further details.

\section{Modulation Theory and Virial Estimate}\label{MTR}

Following the proof of \cite[Proposition 4.1]{CM23}, the decomposition \eqref{g2} satisfies the orthogonality conditions
\[
(h_{1}(t),\Delta Q)=0
\quad \text{and} \quad
(h_{2}(t),Q)=0,
\]
where $h=h_1+ih_2$.  These allow us to use the coercivity of the linearized operators (see \cite[Proposition 3.1]{CM23}) to deduce
$$
\|h(t)\|_{H_x^1}\sim |\alpha(t)|\sim \delta(t),
$$
and consequently
$$
\displaystyle \|g(t)\|_{H_x^1} \lesssim \delta(t).
$$
Next, recalling that $\zeta(t)$ denotes the phase parameter and observing that $g$ satisfies the equation
\begin{equation}\label{solug}
i\partial_t g + (\Delta-1) g  - \dot\zeta (g+Q) + R(g) = 0,
\end{equation}
where
\[
R(g)=|x|^{-b}(|g+Q|^2(g+Q)-|Q|^2Q) %\sim |x|^{-b}(Q^2|g|+Q|g|^2+|g|^3),
\]
we can further deduce
\begin{equation}\label{dottheta}
|\dot\zeta(t)| \lesssim \delta(t).
\end{equation}

As mentioned above, the control of $\dot{\alpha}$ in \cite[Proposition 4.1]{CM23} relied on the restriction $b\in(0,\tfrac12)$, which we aim to avoid in this paper.  Omitting this estimate, one has the following from \cite{CM23}.
\begin{prop}[Modulation theory]\label{ModThe}
Let $b\in(0,1)$ and $u:I\times \mathbb{R}^3\to \mathbb{C}$ be a $H^1$-solution to \eqref{INLS} with initial datum satisfying $M[u_0]=M[Q]$ and $E[u_0]=E[Q]$. For $\delta_0>0$ define
$$
I_0=\{t\in I: \delta(t)<\delta_0\}.
$$
Then, for small enough $\delta_0$, there exist functions $\zeta: I_0\to \mathbb{R}$, $\alpha: I_0\to \mathbb{R}$, $h: I_0\to H^1$ such that
$$
g(t)=e^{-i(\zeta(t)+t)}u(t)- Q =\alpha(t)Q+h(t), 
$$
with
\begin{equation}\label{alpha2}
\|h(t)\|_{H_x^1}\sim |\alpha(t)|\sim \delta(t), \quad  |\dot\zeta(t)| \lesssim \delta(t),
\end{equation}
and
\begin{equation}\label{nablag}
\|g(t)\|_{H_x^1} \lesssim \delta(t).
\end{equation}
\end{prop}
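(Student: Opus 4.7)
The proposition reproduces \cite[Proposition 4.1]{CM23} verbatim except for the deleted pointwise bound on $\dot\alpha$, so the plan is to follow the CM23 argument throughout. First, for each $t\in I_0$, I apply the Implicit Function Theorem to the real-valued functional $F(\zeta):=\operatorname{Im}\bigl(e^{-i(\zeta+t)}u(t)-Q,\,Q\bigr)$. At $u=e^{it}Q$ and $\zeta=0$ one has $F=0$ and $\partial_\zeta F=-(Q,Q)\neq 0$. Smallness of $\delta(t)$, combined with the threshold characterization of $Q$, forces $u(t)$ to lie close in $H^1$ to the orbit $\{e^{i(\theta+t)}Q\}_{\theta\in\R}$, so the IFT produces a unique $\zeta(t)$ for which $g=e^{-i(\zeta+t)}u-Q$ satisfies $(g_2,Q)=0$. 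Defining $\alpha$ and $h$ by \eqref{g2} is algebraic, and the orthogonalities $(h_1,\Delta Q)=0$ and $(h_2,Q)=0$ then follow by construction.

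Next I Taylor expand $M[u]=M[Q]$ and $E[u]=E[Q]$ in $g=\alpha Q+h$ about $Q$. The linear terms vanish by \eqref{EqQ}, and a suitable linear combination of the two conservation laws has quadratic part equal to $\langle L_+g_1,g_1\rangle+\langle L_-g_2,g_2\rangle$, with $L_\pm$ the standard linearized operators associated to \eqref{EqQ}. Applying the coercivity of $L_\pm$ on the orthogonal subspaces singled out by $(h_1,\Delta Q)=(h_2,Q)=0$ (\cite[Proposition 3.1]{CM23}), and absorbing the cubic remainder via the smallness of $\|g\|_{H^1}$, yields $\|h\|_{H^1}^2\lesssim|\alpha|^2$. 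A matching lower bound comes from the fact that $\delta(t)$, after one uses the two conservation laws to remove the $|\alpha|^2$ and $\|h\|_{H^1}^2$ contributions, reduces at leading order to a linear function of $\alpha$ (through $-2\alpha\int|\nabla Q|^2$). The bound $\|g\|_{H^1}\lesssim\delta(t)$ then follows by the triangle inequality.

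For the estimate on $\dot\zeta$, I differentiate the orthogonality $(g_2,Q)=0$ in $t$ and pair the imaginary part of \eqref{solug} with $Q$. Using $(\Delta-1)Q=-|x|^{-b}Q^3$ from \eqref{EqQ}, this gives
\[
\dot\zeta\,\bigl[(Q,Q)+(g_1,Q)\bigr]\;=\;(R_1,Q)-(g_1,|x|^{-b}Q^3),
\]
where $R_1=\operatorname{Re}R(g)$. The linear piece $(g_1,|x|^{-b}Q^3)$ is $O(\|g\|_{H^1})=O(\delta)$, while the nonlinear remainder $R_1-3|x|^{-b}Q^2 g_1=O(|x|^{-b}(Q|g|^2+|g|^3))$ contributes $O(\|g\|_{H^1}^2)$ after pairing against $Q$. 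Since the bracket on the left is $\sim (Q,Q)>0$ once $\delta_0$ is small, this yields $|\dot\zeta|\lesssim\delta(t)$.

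Since the proposition is designed to isolate precisely the portion of the CM23 modulation analysis that is insensitive to the $\dot\alpha$ estimate, there is no real technical obstacle at this stage; the genuinely new contribution of the paper---a Strichartz-type bound on the time integral of $|\dot\alpha|$ valid for all $b\in(0,1)$, replacing the pointwise bound of \cite{CM23} that required $\nabla\Delta Q\in L^2$ and hence $b<\tfrac12$---is deferred to the subsequent sections.
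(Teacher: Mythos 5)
Your proposal is correct and follows essentially the same route as the paper, which simply invokes the modulation analysis of \cite[Proposition 4.1]{CM23} (IFT construction of $\zeta$, orthogonality conditions, coercivity of the linearized operators, and the pairing argument for $\dot\zeta$), omitting only the pointwise $\dot\alpha$ bound that forced $b<\tfrac12$. The only detail you leave implicit is the lower bound $|\alpha|\lesssim\|h\|_{H^1}$ in the equivalence $\|h\|_{H^1}\sim|\alpha|$, which comes from the mass conservation identity $(g_1,Q)=-\tfrac12\|g\|_{L^2}^2$ exactly as in the cited argument of \cite{CM23}.
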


Furthermore, virial arguments can be employed to establish the control over $\delta(t)$ as follows.
\begin{prop}[Virial control]\label{VirialCon}
Let $b\in(0,1)$ and  $u:I\times \mathbb{R}^3\to \mathbb{C}$ be a forward-global $H^1$-solution to \eqref{INLS} with initial datum satisfying $M[u_0]=M[Q]$, $E[u_0]=E[Q]$ and $\|\nabla u_0\|_{L_x^2}\neq\|\nabla Q\|_{L_x^2}$. If $\|\nabla u_0\|_{L_x^2}>\|\nabla Q\|_{L_x^2}$, assume in addition that the solution is radial. Then, there exists $c>0$ such that
\begin{equation}\label{intdelta}
\int_{t}^{\infty}\delta(\tau)\,d\tau\lesssim e^{-ct} \quad \mbox{for all}\quad t> 0.
\end{equation}
\end{prop}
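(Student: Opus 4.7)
The plan is to combine the modulation decomposition of Proposition~\ref{ModThe} with a localized virial (Morawetz-type) identity to derive a self-improving differential inequality for the tail $F(t)=\int_t^\infty \delta(\tau)\,d\tau$. The argument follows the classical template of \cite{CM23} and works uniformly for $b\in(0,1)$ because the virial input only uses the $H^1$-regularity and the exponential decay \eqref{DecayQ} of $Q$, and in particular does not require any extra smoothness of $Q$.

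First, a preliminary convexity/virial argument shows that $\delta(t)<\delta_0$ for all $t\ge T_0$, so that the modulation regime of Proposition~\ref{ModThe} is eventually entered. In the constrained case one uses a standard (or truncated) Morawetz functional; in the unconstrained radial case one uses a truncated variance $V_R(t)=\int \chi_R(x)|u(t,x)|^2\,dx$. In both cases the Pohozaev-type identity for $Q$, combined with mass/energy conservation, forces $\|\nabla u(t)\|_{L^2_x}$ to approach $\|\nabla Q\|_{L^2_x}$ along some sequence $t_n\to\infty$ for a forward-global solution in this class, and then a bootstrap on the modulation regime traps $\delta$ below $\delta_0$ for all large $t$.

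On $[T_0,\infty)$ introduce the Morawetz-type functional
\begin{equation*}
\mathcal{V}(t)=2\,\mathrm{Im}\int \nabla\psi_R(x)\cdot\bar u(t,x)\,\nabla u(t,x)\,dx,
\end{equation*}
with $\psi_R$ a smooth radial weight approximating $\tfrac12|x|^2$ on $|x|\le R$. Substituting $u(t)=e^{i(t+\zeta(t))}(Q+g(t))$ and using that $Q$ is real-valued (so the $Q$-only part of the integrand has vanishing imaginary part), one obtains $|\mathcal{V}(t)|\lesssim \|g(t)\|_{H^1_x}\lesssim \delta(t)$. Computing $\mathcal{V}'(t)$ via the virial identity, invoking the Pohozaev identity for $Q$ to cancel the leading contributions, and applying the coercivity of the linearized energy on the orthogonality subspace cut out by $(h_1,\Delta Q)=0$ and $(h_2,Q)=0$, one gets, for $R$ large enough,
\begin{equation*}
\pm\,\mathcal{V}'(t)\,\ge\, c\,\delta(t)\,-\,O(e^{-cR})\,\ge\,\tfrac{c}{2}\,\delta(t),
\end{equation*}
with the sign determined by the constrained/unconstrained dichotomy. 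Integrating on $[t,\infty)$ and using the bound $|\mathcal{V}|\lesssim \delta$ yields $F(t)\lesssim \delta(t)=-F'(t)$, and a Gronwall argument then produces $F(t)\lesssim e^{-c(t-T_0)}$, which is \eqref{intdelta}.

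The main obstacle is the coercivity step $\pm\mathcal{V}'\gtrsim \delta$. The virial expansion naturally produces terms of indefinite sign, so one must combine the algebraic cancellations coming from the Pohozaev identity for $Q$ with the spectral positivity of the linearized operators established in \cite[Proposition 3.1]{CM23}, restricted to the orthogonality subspace produced by the modulation. Controlling the localization error from the cutoff $\psi_R$ is a secondary (but routine) difficulty, handled by the exponential decay of $Q$ in \eqref{DecayQ}, which allows the $O(e^{-cR})$ error to be absorbed into the main term for $R$ chosen large.
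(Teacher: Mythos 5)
The central gap is the ``preliminary'' trapping claim that $\delta(t)<\delta_0$ for all $t\ge T_0$, on which both your bound $|\mathcal{V}(t)|\lesssim\delta(t)$ and your coercivity step rely. At the threshold level this is not a soft convexity/bootstrap fact: deciding whether the solution can repeatedly exit and re-enter a $\delta_0$-neighborhood of the ground-state orbit is precisely the hard part of the analysis, and in this paper (as in \cite{CM23}) it is settled only \emph{after} \eqref{intdelta}, by combining it with the uniform $\alpha$-control of Lemma~\ref{Lemalpha} to obtain Proposition~\ref{final}; the virial input alone only yields $\delta(t_n)\to0$ along a sequence. Assuming trapping at this stage is therefore circular --- your ``bootstrap on the modulation regime'' is exactly the step that requires the $\alpha$-control, which is the paper's main new contribution and the reason the restriction $b<\tfrac12$ appeared in \cite{CM23}. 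The paper's actual proof avoids any such assumption: in the constrained case it invokes \cite[Lemma 5.3]{CM23}, which gives the two-endpoint bound $\int_{t_1}^{t_2}\delta\,d\tau\lesssim\delta(t_1)+\delta(t_2)$ \emph{without} trapping (the virial argument there treats the regimes $\delta<\delta_0$ and $\delta\ge\delta_0$ separately), and then concludes by the soft chain: boundedness of $\delta$ gives $\int_0^\infty\delta<\infty$, hence $\delta(t_n)\to0$ along a sequence, hence $\int_t^\infty\delta\lesssim\delta(t)$, and Gronwall gives \eqref{intdelta}; the unconstrained radial case is quoted from \cite[Proposition 6.1, Steps 3--4]{CM23}.

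Even granting access to the modulation regime, the pointwise inequality $\pm\mathcal{V}'(t)\ge c\,\delta(t)-O(e^{-cR})\ge\tfrac{c}{2}\delta(t)$ cannot hold with $R$ fixed, since $\delta(t)$ may be arbitrarily small while the truncation errors, though small, are not $o(\delta(t))$; this is exactly why the literature proves the weaker two-endpoint bound rather than a clean differential inequality. Outside the modulation regime (and for non-radial constrained solutions) the cutoff errors are not governed by the decay of $Q$ at all: there one needs either the lower bound $\delta\ge\delta_0$ to absorb $O(R^{-\varepsilon})$ tails, or the precompactness of non-scattering threshold trajectories, neither of which your sketch supplies. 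Two further inaccuracies: the main term does not come from coercivity of the linearized energy --- at the threshold, mass/energy conservation together with the Pohozaev identities for $Q$ make the untruncated virial derivative exactly a fixed constant multiple of $\|\nabla Q\|_{L_x^2}^2-\|\nabla u(t)\|_{L_x^2}^2$, i.e.\ of $\pm\delta(t)$ (coercivity enters the modulation theory, Proposition~\ref{ModThe}, not this identity); and your final step ``integrating on $[t,\infty)$ gives $F(t)\lesssim\delta(t)$'' requires $\delta(s_n)\to0$ along some sequence $s_n\to\infty$, which your argument never produces, whereas the paper extracts it from $\int_0^\infty\delta<\infty$.
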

\begin{proof}
We first consider the case $\|\nabla u_0\|_{L_x^2} < \|\nabla Q\|_{L_x^2}$.  
By the variational characterization of $Q$, we have $\|\nabla u(t)\|_{L_x^2} < \|\nabla Q\|_{L_x^2}$ for all $t$, which in particular implies that $\delta(t)$ is bounded.  
From \cite[Lemma 5.3]{CM23} (independent of any control on $\dot{\alpha}$ and relying solely on the bound \eqref{nablag}), it follows that  
\begin{equation}\label{intdelta12}
\int_{t_1}^{t_2} \delta(\tau) \, d\tau \lesssim \delta(t_1) + \delta(t_2),  
\quad \text{for all} \quad t_2 \ge t_1 \ge 0.
\end{equation}
The boundedness of $\delta(t)$ then implies  
\[
\int_{0}^{\infty} \delta(\tau) \, d\tau < \infty.
\]
Therefore there exists a sequence $t_n \to \infty$ such that $\delta(t_n) \to 0$.  
Applying \eqref{intdelta12} yields  
\[
\int_{t}^{\infty} \delta(\tau) \, d\tau \lesssim \delta(t),  
\quad \text{for all} \quad t > 0.
\]
Gronwall's inequality then gives \eqref{intdelta} for such constrained forward-global solutions.

For unconstrained radial forward-global solutions, the argument is given in \cite[Proposition 6.1]{CM23} (see Steps 3 and 4).
\end{proof}

Our main goal is to establish a new estimate for $|\alpha(s)-\alpha(t)|$, which, in view of \eqref{alpha2} and \eqref{intdelta}, will imply that $\delta(t)\to 0$ as $t\to\infty$ for any forward-global threshold solution (see \cite[Corollary 4.3]{CM23} and Proposition \ref{final} below). 

\section{$L_t^pL_x^q$ control on $g$}\label{LpLqg}

%We begin this section by recalling the following estimate: 
%
%\begin{lemma}[{\cite[Theorem B$^*$]{Stein_Weiss_Fractional}} ]\label{Stein} Assume $\tfrac{1}{p}-\tfrac{s}{d} = \tfrac{1}{q}-\tfrac{a}{d}$, where $0<a\leq s$, $aq<d$, and $p\leq q$.  Then
%%Assume 
%%$$
%%\frac{1}{p}-\frac{s}{N}=\frac{1}{q}-\frac{a}{N}, \quad p\leq q, \quad aq<N, \quad s\geq a,
%%$$
%%then 
%$$
%\||x|^{-a}f\|_{L_x^q(\mathbb{R}^d)}\lesssim \||\nabla|^{s}f\|_{L_x^p(\mathbb{R}^d)}.
%$$
%\end{lemma}

We now analyze the solution $g$ of the equation \eqref{solug}.

\begin{lemma}[Strichartz bounds on $g$]\label{Strg}
Assume that the function $g$ given in Proposition \ref{ModThe} exists for sufficiently large times. If $I = [t,s]$ with $t\gg 1$, then there exists $c>0$ such that
\begin{equation}
    \| g\|_{S(L^2,I)}+\|\nabla g\|_{S(L^2,I)}  \lesssim _{\delta_0} \delta(t) + e^{-ct},
\end{equation}    
where $S(L^2,I)$ denotes the Strichartz space on $I\times\R^3$. 
\end{lemma}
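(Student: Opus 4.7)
The plan is to apply Strichartz estimates to the Duhamel form of \eqref{solug} on $I=[t,s]$ for both $g$ and $\nabla g$, exploiting the a priori pointwise control from Proposition~\ref{ModThe} ($\|g(\tau)\|_{H^1_x}\lesssim \delta(\tau)$ and $|\dot\zeta(\tau)|\lesssim \delta(\tau)$) together with the integrability furnished by Proposition~\ref{VirialCon} ($\int_t^\infty \delta(\tau)\,d\tau \lesssim e^{-ct}$). Writing
\[
g(\tau) = e^{i(\tau-t)(\Delta-1)}g(t) + i\int_t^\tau e^{i(\tau-\sigma)(\Delta-1)}\bigl[\dot\zeta(\sigma)(g+Q)(\sigma) - R(g)(\sigma)\bigr]\,d\sigma,
\]
Strichartz estimates bound $\|g\|_{S(L^2,I)}+\|\nabla g\|_{S(L^2,I)}$ by $\|g(t)\|_{H^1}$ plus suitable dual-Strichartz norms of $\dot\zeta(g+Q)$, $R(g)$, and their spatial gradients.

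The initial data contribute $\|g(t)\|_{H^1}\lesssim \delta(t)$ by \eqref{nablag}. For the modulation terms I would use the $L^1_tL^2_x$ dual-Strichartz norm: since $Q,\nabla Q\in L^2_x$ and $\|\dot\zeta\|_{L^1_t(I)}\lesssim \int_t^\infty \delta \lesssim e^{-ct}$, one obtains $\|\dot\zeta\,Q\|_{L^1_tL^2_x}+\|\dot\zeta\,\nabla Q\|_{L^1_tL^2_x}\lesssim e^{-ct}$; combining $|\dot\zeta|\lesssim\delta$ with $\|g\|_{H^1}\lesssim\delta$ yields $\|\dot\zeta\,g\|_{L^1_tL^2_x}+\|\dot\zeta\,\nabla g\|_{L^1_tL^2_x}\lesssim \int \delta^2 \lesssim \delta_0\,e^{-ct}$.

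For the nonlinear term, I would expand $R(g)=|x|^{-b}\bigl[|g+Q|^2(g+Q)-|Q|^2Q\bigr]$ into contributions that are linear, quadratic, and cubic in $g$. The critical part is the linear piece $\sim |x|^{-b}Q^2 g$ together with its gradient; applying the Leibniz rule, the most singular contribution is $|x|^{-b-1}Q^2 g$, which I would estimate in the dual-Strichartz space $L^2_tL^{6/5}_x$. By H\"older and Sobolev embedding, together with the decay \eqref{DecayQ},
\[
\||x|^{-b-1}Q^2\,g\|_{L^{6/5}_x}\lesssim \||x|^{-b-1}Q^2\|_{L^{3/2}_x}\|g\|_{L^6_x}\lesssim \|g\|_{H^1_x}\lesssim \delta(\tau),
\]
where the crucial spatial norm $\||x|^{-b-1}Q^2\|_{L^{3/2}_x}$ is finite precisely in the range $b\in(0,1)$. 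Squaring in time and using $\int \delta^2 \lesssim \delta_0\int \delta \lesssim \delta_0\, e^{-ct}$ then yields an $L^2_tL^{6/5}_x$ bound of order $\delta_0^{1/2}e^{-ct/2}$; the remaining Leibniz pieces ($|x|^{-b}Q\nabla Q\,g$ and $|x|^{-b}Q^2\nabla g$) admit analogous estimates via $\||x|^{-b}Q\nabla Q\|_{L^{3/2}_x}$ and $\||x|^{-b}Q^2\|_{L^3_x}$, both finite for $b<1$. The quadratic and cubic pieces (and their gradients) are handled by H\"older--Sobolev with appropriately chosen Strichartz pairs, the uniform smallness $\|g\|_{L^\infty_tH^1_x}\lesssim \delta_0$ producing factors $\delta_0^\theta(\|g\|_{S(L^2,I)}+\|\nabla g\|_{S(L^2,I)})$ that are absorbed into the left-hand side once $\delta_0$ is chosen small enough.

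The main obstacle is controlling the gradient of the linear piece of $R(g)$: when $\nabla$ falls on the inhomogeneous weight, it produces the singular factor $|x|^{-b-1}$. The key point is that one does \emph{not} attempt to close a Strichartz estimate against this fixed, non-small potential; instead, the a priori bound $\|g(\tau)\|_{H^1_x}\lesssim \delta(\tau)$, combined with the integrability $\int \delta^2 \lesssim \delta_0\,e^{-ct}$, delivers the dual-Strichartz bound directly. The range $b\in(0,1)$ is sharp for this step, being exactly the condition under which $|x|^{-b-1}Q^2\in L^{3/2}_x$---a substantially weaker requirement than $\nabla\Delta Q\in L^2_x$, which constrains the approach of \cite{CM23} to $b<\tfrac12$.
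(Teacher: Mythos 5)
Your proposal follows essentially the same route as the paper's proof: Duhamel plus Strichartz on $I=[t,s]$, the data term controlled by $\delta(t)$ via \eqref{nablag}, the modulation terms by $|\dot\zeta|\lesssim\delta$ together with $\int_t^\infty\delta(\tau)\,d\tau\lesssim e^{-ct}$, and the linear-in-$g$ part of $R(g)$ and $\nabla R(g)$ by weighted H\"older with the norms $\||x|^{-b}Q^2\|_{L^3_x}$, $\||x|^{-b}Q^2\|_{L^{3/2}_x}$, $\||x|^{-b-1}Q^2\|_{L^{3/2}_x}$, all finite exactly for $b<1$. The only cosmetic difference in this part is that the paper routes the singular term $|x|^{-b-1}Q^2|g|$ through Hardy's inequality ($\||x|^{-1}g\|_{L^2_x}\lesssim\|\nabla g\|_{L^2_x}$) rather than through $\||x|^{-b-1}Q^2\|_{L^{3/2}_x}$; the two are interchangeable and impose the same restriction $b<1$.

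The one imprecise point is your claim that the quadratic and cubic pieces produce factors of the form $\delta_0^\theta\bigl(\|g\|_{S(L^2,I)}+\|\nabla g\|_{S(L^2,I)}\bigr)$ that can be absorbed linearly. For the genuinely cubic terms, e.g. $|x|^{-b}|g|^2\nabla g$ and $|x|^{-b-1}|g|^3$ estimated in $L^2_tL^{6/5}_x$, the weighted factors such as $\||x|^{-b/2}g\|_{L^4_tL^6_x}$ are not controlled pointwise in time by $\|g\|_{H^1_x}\lesssim\delta_0$ (no H\"older/Sobolev bound of the form $\||x|^{-b/2}g\|_{L^6_x}\lesssim\|g\|_{H^1_x}$ is available for $b\in(0,1)$); they must themselves be estimated, via Hardy, Sobolev embedding and interpolation, by $L^2$-admissible Strichartz norms. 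The resulting contribution is therefore cubic (or at best $\delta_0$ times quadratic) in the quantity being estimated, not linear with a small constant, and one closes instead through a continuity (bootstrap) argument from an inequality of the form $X\lesssim\delta(t)+e^{-ct/2}+X^3$, valid once $\delta_0$ is small and $t\gg1$ --- which is precisely how the paper concludes. With that adjustment, your argument coincides with the paper's proof.
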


\begin{proof}
Let $I=(t,s)$.  From Duhamel's principle and Strichartz estimates, we obtain
\begin{align}
\|g\|_{S(L^2,I)}+\|\nabla g\|_{S(L^2,I)} &\lesssim \|g(t)\|_{L_x^2}+\|\nabla g(t)\|_{L_x^2} + \|\dot\zeta  (g+Q)\|_{L^1_I L^2_x} +  \|\dot\zeta \nabla(g+Q)\|_{L^1_I L^2_x} \\
&\quad +\|R(g)\|_{L^1_I L^{2}_x} + \|\nabla R(g)\|_{L^2_I L^{6/5}_x}.
\end{align}
First observe that
\begin{align}
\|\dot\zeta \nabla(g+Q)\|_{L^1_I L^2_x}+\|\dot\zeta  (g+Q)\|_{L^1_I L^2_x}&\lesssim \|\dot\zeta\|_{L_I^1}\|Q\|_{H_x^1}+\|\dot\zeta\|_{L_I^1}(\|g\|_{L^{\infty}_I L^{2}_x}+\|\nabla g\|_{L^{\infty}_I L^{2}_x})\\
&\lesssim e^{-ct}+e^{-ct}(\|g\|_{S(L^2,I)}+\|\nabla g\|_{S(L^2,I)}),
\end{align}
where we have used \eqref{dottheta} and \eqref{intdelta} in the last inequality.

We next estimate the error term $R(g)$. From the inequality 
\begin{align}\label{Qineq}
Q|g|^2
%=|Q|g|^{1/2}|g|^{3/2}|
\lesssim Q^2|g|+|g|^3,
\end{align}
we get
$$
\|R(g)\|_{L^1_I L^{2}_x}\lesssim \||x|^{-b}(Q^2|g|+|g|^3)\|_{L^1_I L^{2}_x}.
$$
We now note
\begin{align}
\||x|^{-b}Q^2g\|_{L^1_I L^2_x}&\lesssim \||x|^{-b}Q^2\|_{L_x^3}\|g\|_{L^1_I L^6_x}\lesssim \int_I \|\nabla g(\tau)\|_{L_x^2}\,d \tau \lesssim e^{-ct},
\end{align}
where we have used Sobolev embedding, \eqref{nablag}, \eqref{intdelta} and the fact that $\||x|^{-b}Q^2\|_{L_x^3}<\infty$ (cf. \eqref{DecayQ} and the fact that $b<1$).  We note that this estimate breaks down at the energy-critical case $b=1$\footnote{{The energy-critical case $b=1$ requires a different approach and will be treated in a companion work \cite{CFMarxiv25EC}.}}. Furthermore, we have by Hardy's inequality that
\begin{align}
\||x|^{-b}|g|^3\|_{L^1_I L^2_x}&\lesssim \||x|^{-b/3}g\|^3_{L^3_I L^6_x}\lesssim \||\nabla|^{b/3}g\|^3_{L^3_I L^6_x}.
\end{align}
%where we have used Lemma \ref{Stein}.

Moreover, Sobolev embedding and interpolation yield
\begin{align}
\||\nabla|^{b/3}g\|^3_{L^3_I L^6_x}\lesssim \||\nabla|^{(b+1)/3}g\|^3_{L^3_I L^{18/5}_x}&\lesssim (\|g\|_{L^3_I L^{18/5}_x}+\|\nabla g\|_{L^3_I L^{18/5}_x})^3\\
&\lesssim (\|g\|_{S(L^2,I)}+\|\nabla g\|_{S(L^2,I)})^3,
\end{align}
since the pair $(3,18/5)$ is $L^2$-admissible.
%$$
%\|R(g)\|_{L^1_I L^{2}_x}\lesssim e^{-ct}+(\|g\|_{S(L^2,I)}+\|\nabla g\|_{S(L^2,I)})^3.
%$$

Next, we estimate the term $\nabla R(g)$. Recall that
$$
|\nabla R(g)| \lesssim |x|^{-b}(Q^2|\nabla g|+Q|\nabla Q||g|+Q|g||\nabla g|+|g|^2|\nabla g|+|\nabla Q| |g|^2)+|x|^{-b-1}(Q^2|g|+Q|g|^2+|g|^3).
$$
Thus, from the inequalities \eqref{DecayQ}, \eqref{Qineq}, and 
\begin{align}\label{Qineq2}
Q|g||\nabla g|%=|Q|\nabla g|^{1/2}|g||\nabla g|^{1/2}|
\lesssim Q^2|\nabla g|+|g|^2|\nabla g|
\end{align}
we have
$$
\|\nabla R(g)\|_{L^2_I L^{6/5}_x}\lesssim \||x|^{-b}(Q^2|\nabla g|+Q^2|g|+|g|^2|\nabla g|+|g|^3)+|x|^{-b-1}(Q^2|g|+|g|^3)\|_{L^2_I L^{6/5}_x}.
$$
Now, again using that $\||x|^{-b}Q^2\|_{L_x^3}<\infty$, we deduce
\begin{align}
\||x|^{-b}Q^2|\nabla g|\|_{L^2_I L^{6/5}_x}&\lesssim \||x|^{-b}Q^2\|_{L_x^3}\|\nabla g\|_{L^2_I L^{2}_x}\\
&\lesssim \left(\int_I \|\nabla g(\tau)\|^2_{L^{2}_x}\,d\tau\right)^{1/2}\lesssim \left(\int_I \delta(\tau)\,d\tau\right)^{1/2}\lesssim e^{-ct/2},
\end{align}
where we have used \eqref{nablag}, \eqref{intdelta} and the fact that $\delta(t)\leq 1$. Moreover, from Hardy's inequality
\begin{align}\label{E1}
\||x|^{-b-1}Q^2|g|\|_{L^2_I L^{6/5}_x}&\lesssim \||x|^{-b}Q^2\|_{L_x^3}\||x|^{-1} g\|_{L^2_I L^{2}_x}\\
&\lesssim \|\nabla g\|_{L^2_I L^{2}_x}\lesssim e^{-ct/2},
\end{align}
as in the previous estimate. Next, we use \eqref{DecayQ} to deduce
\begin{align}
\||x|^{-b}Q^2 g\|_{L^2_I L^{6/5}_x}&\lesssim \||x|^{-b}Q^2\|_{L_x^{3/2}}\| g\|_{L^2_I L^{6}_x}\lesssim \|\nabla g\|_{L^2_I L^{2}_x}\lesssim e^{-ct/2},
\end{align}
arguing as before and using Sobolev embedding and the fact that $\||x|^{-b}Q\|_{L_x^{3/2}}<\infty$ (from \eqref{DecayQ} and the constraint $b<1$). Now, using Holder inequality, Hardy's inequality, Sobolev embedding, and interpolation, we get
\begin{align}
\||x|^{-b}|g|^2\nabla g\|_{L^2_I L^{6/5}_x}&\lesssim \||x|^{-b/2}g\|^2_{L^4_I L^{6}_x}\|\nabla g\|_{L^{\infty}_I L^{2}_x}\\
&\lesssim \||\nabla|^{(1+b)/2}g\|^2_{L^4_I L^{3}_x}\|\nabla g\|_{L^{\infty}_I L^{2}_x}\\
&\lesssim \left(\|g\|_{L^4_I L^{3}_x}+\|\nabla g\|_{L^4_I L^{3}_x}\right)^2\|\nabla g\|_{L^{\infty}_I L^{2}_x}\\
&\lesssim \left(\|g\|_{S(L^2,I)}+\|\nabla g\|_{S(L^2,I)}\right)^3,
\end{align}
since the pair $(4,3)$ is $L^2$-admissible. Moreover, applying Hardy's inequality, we obtain
\begin{align}\label{E2}
\||x|^{-b-1}|g|^3\|_{L^2_I L^{6/5}_x}&\lesssim \||x|^{-b/2}g\|^2_{L^4_I L^{6}_x}\||x|^{-1}g\|_{L^{\infty}_I L^{2}_x}\\
&\lesssim \||x|^{-b/2}g\|^2_{L^4_I L^{6}_x}\|\nabla g\|_{L^{\infty}_I L^{2}_x}\\
&\lesssim \left(\|g\|_{S(L^2,I)}+\|\nabla g\|_{S(L^2,I)}\right)^3,
\end{align}
as in the previous estimate.
Similarly,
\begin{align}
\||x|^{-b}|g|^3\|_{L^2_I L^{6/5}_x}&\lesssim \||x|^{-b/2}g\|^2_{L^4_I L^{6}_x}\|g\|_{L^{\infty}_I L^{2}_x}\\
&\lesssim \||\nabla|^{(1+b)/2}g\|^2_{L^4_I L^{3}_x}\| g\|_{L^{\infty}_I L^{2}_x}\\
&\lesssim \left(\|g\|_{L^4_I L^{3}_x}+\|\nabla g\|_{L^4_I L^{3}_x}\right)^2\|g\|_{L^{\infty}_I L^{2}_x}\\
&\lesssim \left(\|g\|_{S(L^2,I)}+\|\nabla g\|_{S(L^2,I)}\right)^3.
\end{align}
We collect the previous estimates and \eqref{nablag} to finally deduce
\begin{align}\label{Boot}
\|g\|_{S(L^2,I)}+\|\nabla g\|_{S(L^2,I)} &\lesssim \delta(t)+ e^{-ct/2}+\left(\|g\|_{S(L^2,I)}+\|\nabla g\|_{S(L^2,I)}\right)^3.
\end{align}
The result now follows from a continuity argument, provided $\delta_0$ is chosen sufficiently small. 
%Indeed, define
%$$
%X(s)=\|g\|_{S(L^2,[t,s])}+\|\nabla g\|_{S(L^2,[t,s])}.
%$$
%%From \eqref{nablag} we have that
%By making $s = t$, this definition gives
%$$
% X(t)\approx \| g(t)\|_{H_x^1}\lesssim \delta(t)<\delta_0,
%$$
%which can be chosen as small as we wish in the modulation theory. Moreover, from \eqref{Boot}, if $[t,s]\subset I_0=\{t: \delta(t)<\delta_0\}$, then there exists a constant $C>0$ such that
%\begin{align}\label{Boot2}
%X(s) \leq C\delta(t)+ Ce^{-ct/2}+CX^3(s).
%\end{align}
%Now consider the function
%$$
%f(y)=y-Cy^3-C\delta(t)- Ce^{-ct/2}.
%$$
%It is straightforward to check that $y^{\ast}=1/\sqrt{3C}$ and $f(y^{\ast})=2y^{\ast}/3-C\delta(t)- Ce^{-ct/2}$ are the maximal point and maximal value of $f$, respectively. Choosing $\delta_0>0$ small enough and $t>0$ sufficiently large we have $f(y^{\ast})>0$. Therefore, since $f(X(s))\leq 0$ by \eqref{Boot2}, we must have $X(s) \leq y^{\ast}=1/\sqrt{3C}$. Finally, if $[t,s]\subset I_0=\{t: \delta(t)<\delta_0\}$, then
%\begin{align}
%X(s) &\leq C\delta(t)+ Ce^{-ct/2}+CX^3(s)\\
%&\leq C\delta(t)+ Ce^{-ct/2}+X(s)/3,
%\end{align}
%and the result follows by absorbing the term $X(s)/3$ in the left-hand side of the last inequality.
\end{proof}

\begin{coro}\label{Lintg}
Let $I=[t,s]$ with $t\gg 1$ and $a>0$. Then there exists $c>0$ such that
\begin{equation}\label{intg}
\int_I\|\nabla g(\tau)\|^a_{L_x^2}\,d\tau \lesssim e^{-ct}.
\end{equation}
\end{coro}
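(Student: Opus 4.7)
The plan is to combine the pointwise modulation bound $\|\nabla g(\tau)\|_{L^2_x}\lesssim\delta(\tau)$ from Proposition \ref{ModThe}, the integrated virial decay \eqref{intdelta}, and the Strichartz estimate of Lemma \ref{Strg}. I would split the argument according to the range of $a$, since the case $a\geq 1$ is essentially immediate while $0<a<1$ requires a preliminary upgrade of integral decay to pointwise decay.

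For $a\geq 1$, no input from Lemma \ref{Strg} is needed. Since $\delta(\tau)\leq\delta_0$, the modulation bound gives $\|\nabla g(\tau)\|^a_{L^2_x}\lesssim \delta_0^{a-1}\delta(\tau)$, and integrating over $I=[t,s]$ while applying \eqref{intdelta} directly yields
\[
\int_I \|\nabla g(\tau)\|^a_{L^2_x}\,d\tau \;\lesssim\; \delta_0^{a-1}\int_t^\infty \delta(\sigma)\,d\sigma \;\lesssim\; e^{-ct}.
\]

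For $0<a<1$ the inequality $\delta^a \lesssim \delta$ reverses, so I would first establish the pointwise bound $\|\nabla g(\tau)\|_{L^2_x}\lesssim e^{-c\tau}$ valid for all $\tau\gg 1$. Fix such a $\tau$. Applying \eqref{intdelta} on the slab $[\tau-1,\tau]$ gives $\int_{\tau-1}^{\tau}\delta(\sigma)\,d\sigma\lesssim e^{-c\tau}$, so by Chebyshev there exists $\tau_*\in[\tau-1,\tau]$ with $\delta(\tau_*)\lesssim e^{-c\tau}$. Now applying Lemma \ref{Strg} on the interval $[\tau_*,s]$ (legitimate since $\tau_*\geq\tau-1\gg 1$) yields
\[
\|\nabla g\|_{L^\infty([\tau_*,s],\,L^2_x)} \;\lesssim\; \delta(\tau_*)+e^{-c\tau_*} \;\lesssim\; e^{-c\tau},
\]
and since $\tau\in[\tau_*,s]$ the desired pointwise decay $\|\nabla g(\tau)\|_{L^2_x}\lesssim e^{-c\tau}$ follows. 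Integrating over $I$ then gives
\[
\int_I \|\nabla g(\tau)\|^a_{L^2_x}\,d\tau \;\lesssim\; \int_t^\infty e^{-ac\tau}\,d\tau \;\lesssim\; e^{-act},
\]
completing the argument (after relabelling the constant $c$).

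The main obstacle is the sub-unity case $0<a<1$: there, the modulation bound together with only integral control on $\delta$ is not enough, because raising a small quantity to a power less than one enlarges it. The resolution is the Chebyshev-plus-Strichartz combination above, which converts integral smallness of $\delta$ into pointwise exponential smallness of $\|\nabla g(\tau)\|_{L^2_x}$ at the cost of a slightly worse exponential rate. This is the only place in the proof where Lemma \ref{Strg} is essential.
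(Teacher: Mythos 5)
Your proof is correct, but for the delicate range $0<a<1$ it takes a genuinely different route from the paper. The paper's own argument never invokes Lemma~\ref{Strg}: it simply covers $I$ by unit intervals $[t+n,t+n+1]$, applies H\"older's inequality on each unit interval to bound $\int_{t+n}^{t+n+1}[\delta(\tau)]^a\,d\tau$ by $\bigl(\int_{t+n}^{\infty}\delta(\tau)\,d\tau\bigr)^a\lesssim e^{-ca(t+n)}$ via \eqref{nablag} and \eqref{intdelta}, and then sums the geometric series; this is more elementary and uses only the virial decay. Your argument instead upgrades the integral decay to a \emph{pointwise} bound $\|\nabla g(\tau)\|_{L_x^2}\lesssim e^{-c\tau}$ by a mean-value selection of $\tau_*\in[\tau-1,\tau]$ with $\delta(\tau_*)\lesssim e^{-c\tau}$ followed by the $L^\infty_tL^2_x$ endpoint of the Strichartz bound in Lemma~\ref{Strg} on $[\tau_*,s]$, and then integrates; this buys a stronger conclusion (pointwise exponential decay of $\|\nabla g\|_{L^2_x}$, and hence essentially of $\delta$, anticipating \eqref{Limdelta}) at the cost of making the corollary depend on Lemma~\ref{Strg}, which the paper's version deliberately avoids. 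Two minor points to tidy up: your slab $[\tau-1,\tau]$ can dip slightly below $t$ when $\tau$ is near $t$, so you should note that \eqref{intdelta} holds for all positive times and that the standing hypothesis that $g$ exists at all sufficiently large times covers $[\tau_*,s]$; and you should state explicitly that $(\infty,2)$ is $L^2$-admissible so that $S(L^2,[\tau_*,s])$ indeed controls $L^\infty_tL^2_x$. Neither affects the validity of the argument.
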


\begin{proof}
If $a\geq 1$, the desired estimate follows from \eqref{nablag}, \eqref{intdelta}, and the fact that $\delta(t)\leq 1$.  Thus we assume $0<a<1$. Since $I\subset \displaystyle\bigcup_{n=0}^{\infty}[t+n,t+n+1]$, by using \eqref{nablag}, Holder's inequality and \eqref{intdelta} we get
\begin{align}
\int_I\|\nabla g(\tau)\|^a_{L_x^2}\,d\tau &\lesssim \int_I[\delta(\tau)]^a \,d\tau\\
&\lesssim \sum_{n=0}^{\infty}\int_{t+n}^{t+n+1}[\delta(\tau)]^a \,d\tau\\
&\lesssim \sum_{n=0}^{\infty}\left(\int_{t+n}^{\infty}\delta(\tau) \,d\tau\right)^a\lesssim \sum_{n=0}^{\infty}e^{-ca(t+n)}\lesssim e^{-cat},
\end{align}
which implies the desired estimate.\end{proof}

\section{Uniform control on $\alpha$}\label{Ualpha}

Using the control on the function $g$ obtained in the previous section, we can now derive a uniform estimate for $|\alpha(s) - \alpha(t)|$, where $\alpha$ is the parameter introduced in \eqref{g2}. 

\begin{lemma}\label{Lemalpha} For some $c>0$ and $s\geq t\geq 0$, we have
\begin{equation}\label{exp_control+alpha}
    |\alpha(s)-\alpha(t)| \lesssim_{\delta_0} e^{-ct}
\end{equation}
    
\end{lemma}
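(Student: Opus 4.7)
The plan is to integrate $\dot\alpha$ from $t$ to $s$ and bound each resulting term by $e^{-ct}$, using the Strichartz bounds of Lemma \ref{Strg} together with the integrated estimates from Corollary \ref{Lintg} and Proposition \ref{VirialCon}. Starting from $\alpha(\tau)(Q,\Delta Q)=(g_1(\tau),\Delta Q)$ and the equation \eqref{solug} for $g$, a direct computation (taking the real part) gives
$$
\dot\alpha(\tau)(Q,\Delta Q) = -((\Delta-1)g_2,\Delta Q) + \dot\zeta(g_2,\Delta Q) - (\Im R(g),\Delta Q).
$$
Using $\Delta Q = Q - |x|^{-b}Q^3$ from \eqref{EqQ} and the orthogonality $(g_2,Q)=0$, the middle term collapses to $-\dot\zeta(g_2,|x|^{-b}Q^3)$, and since $(\Delta-1)\Delta Q = -\Delta(|x|^{-b}Q^3)$, integration by parts in space turns the first term into $-(\nabla g_2,\nabla(|x|^{-b}Q^3))$.

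The hard part is this first term. The pointwise bound $|\nabla(|x|^{-b}Q^3)|\lesssim |x|^{-b-1}Q^3$, a consequence of \eqref{DecayQ}, fails to give an $L^2_x$ weight when $b\geq\tfrac12$, which is precisely the obstruction in \cite{CM23}. The key observation is that $|x|^{-b-1}Q^3 \in L^{3/2}_x$ for all $b<1$, so Hölder's inequality gives $|(\nabla g_2,\nabla(|x|^{-b}Q^3))| \lesssim \|\nabla g\|_{L^3_x}$. To control $\int_t^s \|\nabla g\|_{L^3_x}\,d\tau$, I would interpolate $\|\nabla g\|_{L^3_x} \leq \|\nabla g\|_{L^2_x}^{1/2}\|\nabla g\|_{L^6_x}^{1/2}$ and apply Hölder in time with exponents $(4/3,4)$, obtaining
$$
\int_t^s \|\nabla g\|_{L^3_x}\,d\tau \lesssim \left(\int_t^s \|\nabla g\|_{L^2_x}^{2/3}\,d\tau\right)^{3/4}\|\nabla g\|_{L^2_I L^6_x}^{1/2}.
$$
The first factor is $\lesssim e^{-ct}$ by Corollary \ref{Lintg}, and the second is bounded by Lemma \ref{Strg}, producing the desired exponential decay.

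The remaining two terms are more routine. For the middle term, $|\dot\zeta|\lesssim\delta$ from \eqref{alpha2} and $|(g_2,|x|^{-b}Q^3)|\lesssim\|g_2\|_{L^2}\lesssim\delta$ give a pointwise bound $\lesssim\delta^2$, and $\int_t^\infty \delta^2\,d\tau \leq \delta_0 \int_t^\infty \delta\,d\tau \lesssim e^{-ct}$ by Proposition \ref{VirialCon}. For $(\Im R(g),\Delta Q)$, I would expand $\Delta Q = Q-|x|^{-b}Q^3$ and estimate the six resulting products of the form $\int |x|^{-b\text{ or }-2b}Q^k|g|^j\,dx$. When $j\leq 2$, standard Hölder with $\|g\|_{L^6_x}\lesssim\delta$ gives pointwise control by $\delta$ or $\delta^2$, as all required weighted $L^p_x$-norms of $Q$-powers are finite throughout $b<1$ (the tightest being $|x|^{-2b}Q^4\in L^{3/2}_x$, which needs $b<1$). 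The delicate piece $\int |x|^{-2b}Q^3|g|^3\,dx$, for which the naive $\|g\|_{L^6}^3$-pairing breaks down once $b\geq\tfrac34$, I would handle by pulling out $\|g\|_{L^\infty_x}$ and applying Hardy:
$$
\int |x|^{-2b}Q^3|g|^3\,dx \lesssim \|g\|_{L^\infty_x}\int |x|^{-2b}|g|^2\,dx \lesssim \|g\|_{L^\infty_x}\,\delta^2.
$$
Integrating in time via Cauchy--Schwarz yields $\|g\|_{L^2_I L^\infty_x}(\int\delta^4\,d\tau)^{1/2}$; the $L^\infty_x$ norm is absorbed by the Morrey embedding $W^{1,6}_x\hookrightarrow L^\infty_x$ together with Lemma \ref{Strg}, and $(\int\delta^4)^{1/2}\lesssim e^{-ct/2}$ by Proposition \ref{VirialCon}. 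Summing the contributions from all three terms gives $|\alpha(s)-\alpha(t)|\lesssim e^{-ct}$.
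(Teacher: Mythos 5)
Your proof is correct, and it follows essentially the same strategy as the paper: integrate $|\dot\alpha|$ over $[t,s]$ and bound each resulting term using the Strichartz bounds of Lemma~\ref{Strg}, the integrated bound of Corollary~\ref{Lintg}, and the virial estimate \eqref{intdelta}. Your technical execution differs in a few pleasant ways: you use the elliptic equation to replace $(\Delta-1)\Delta Q$ by $-\Delta(|x|^{-b}Q^3)$ and the orthogonality $(g_2,Q)=0$ to simplify the $\dot\zeta$ term (the paper instead estimates $\int|g\,\Delta Q|$ and $\int|\dot\zeta g\,\Delta Q|$ directly); you pair $\nabla(|x|^{-b}Q^3)\in L^{3/2}_x$ with $\|\nabla g\|_{L^3_x}$ where the paper pairs $\nabla\Delta Q\in L^{4/3}_x$ with $\|\nabla g\|_{L^4_x}$ (both interpolations close for all $b<1$ via Corollary~\ref{Lintg} and the $L^2_tL^6_x$ Strichartz norm); and for the delicate cubic term $\int|x|^{-2b}Q^3|g|^3$, you pull out $\|g\|_{L^\infty_x}$ via Morrey's embedding $W^{1,6}_x\hookrightarrow L^\infty_x$ and the endpoint pair $(2,6)$, whereas the paper uses $\|g\|_{L^{24}_x}^2\lesssim\|\nabla g\|_{L^{8/3}_x}^2$ and the admissible pair $(\tfrac{16}{3},\tfrac83)$ — both routes rely on the same $e^{-ct}$ input and are valid for all $b\in(0,1)$. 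One cosmetic point: the pointwise bound $|\nabla(|x|^{-b}Q^3)|\lesssim|x|^{-b-1}Q^3$ is not literally true for $|x|\gtrsim1$ (there $|\nabla Q|\sim Q$, not $|x|^{-1}Q$, by \eqref{DecayQ}), but the statement you actually use, namely $\nabla(|x|^{-b}Q^3)\in L^{3/2}_x$ for $b<1$, holds because the borderline behavior occurs only near the origin while the tail is exponentially decaying; stating the bound separately near and away from the origin fixes this without changing anything else.
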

\begin{proof}
Recall that
$$
g=\alpha Q+h
$$
and
$$
i\partial_t g + (\Delta-1) g  - \dot\zeta (g+Q) + R(g) = 0.
$$
We already proved that
$$
|\alpha(t)|\sim \|h(t)\|_{H_x^1}\sim \alpha(t)\quad \mbox{and} \quad |\dot\zeta(t)|\lesssim \alpha(t)\leq 1.
$$
By the orthogonality condition
\begin{align}
\dot\alpha\|\nabla Q\|_{L^2} = -\Re(\partial_t g,\Delta Q)_{L^2},
\end{align}
we have
$$
|\dot\alpha|\sim c\left|(-(\Delta-1) g  + \dot\zeta g - R(g),\Delta Q)_{L^2}\right|.
$$
Therefore,	 for $I=[t,s]$
\begin{equation}\label{alpha}
\begin{aligned}|& \alpha(s)-\alpha(t)| \\
&\lesssim \int_I|\dot\alpha(\tau)|\,d\tau\\
&\lesssim \int_I\int|\nabla g \nabla \Delta Q|\,dx\,d\tau+\int_I\int| g  \Delta Q|\,dx\,d\tau+\int_I\int| \dot\zeta g \Delta Q|\,dx\,d\tau+\int_I\int|R(g)\Delta Q|\,dx\,d\tau
\end{aligned}
\end{equation}

In view of \eqref{DecayQ} and \eqref{DnQ}, we have that $\nabla \Delta Q  \in L^{p}(\mathbb{R}^3),$ if $b<1$ and $p<3/2$. We then have
%\begin{align}\label{gradg}
%\int_I \int |\nabla \Delta Q| |\nabla g| &\leq \| \nabla \Delta Q\|_{L^{3/2-}_x} \int_I \|\nabla g(\tau)\|_{L^{3+}_x} \, d \tau\\
%&\lesssim\int_I \|\nabla g(\tau)\|_{L^2_x}^{\zeta}\|\nabla g(\tau)\|_{L^6_x}^{1-\zeta}\\
%    &\lesssim \left[\int_I \|\nabla g(\tau)\|_{L^2_x}^{\frac{2\zeta}{1+\zeta}} \, d \tau\right]^{\frac{1+\zeta}{2}} \left[ \|\nabla g\|_{L^2_tL^6_x(I)}\right]^{1-\zeta}\\
%    &\lesssim e^{-{c}t},
%\end{align}
{
\begin{align}\label{gradg}
\int_I \int |\nabla \Delta Q| |\nabla g|\,dx\,d\tau &\leq \| \nabla \Delta Q\|_{L^{\frac{4}{3}}_x} \int_I \|\nabla g(\tau)\|_{L^{4}_x} \, d \tau\\
&\lesssim\int_I \|\nabla g(\tau)\|_{L^2_x}^{\frac14}\|\nabla g(\tau)\|_{L^6_x}^{\frac34}\,d\tau \\
&\lesssim \left[\int_I \|\nabla g(\tau)\|_{L^2_x}^{\frac{2}{5}} \, d \tau\right]^{\frac{5}{8}} \left[ \|\nabla g\|_{L^2_tL^6_x(I)}\right]^{\frac34}\lesssim e^{-{c}t},
\end{align}
}
where we have used Lemma \ref{Strg} and Corollary \ref{Lintg} in the last line.

Moreover,
\begin{align}
\int_I\int| g  \Delta Q|\,dx\,dt &\leq \| \Delta Q\|_{L^{6/5}_x} \int_I \|g(\tau)\|_{L^{6}_x} \, d \tau\\
&\lesssim \| Q+|x|^{-b}Q^3\|_{L^{6/5}_x} \int_I \|\nabla g(\tau)\|_{L^2_x}\,d\tau \lesssim
%(1+\||x|^{-b}\|_{L^{6/5}(|x|<1)}) \int_I \|\nabla g(\tau)\|_{L^2_x}\,d\tau\lesssim
e^{-{c}t},
\end{align}
and since $|\dot\zeta(t)|\leq 1$, we also have 
\begin{align}
\int_I\int| \dot\zeta g  \Delta Q| &\lesssim e^{-{c}t}.
\end{align}

To control the final integral on the right hand side of \eqref{alpha}, we recall that $R(g) \sim |x|^{-b}(Q^2|g|+Q|g|^2+|g|^3)$. Thus in view of \eqref{Qineq} we have
\begin{align}
\int_I\int|R(g)\Delta Q|\,dx\,d\tau\lesssim \int_I\int|x|^{-b}|Q^2g \Delta Q|\,dx\,d\tau+\int_I\int |x|^{-b}|g|^3 |\Delta Q|\,dx\,d\tau.
\end{align}
Using the equation for $Q$ and \eqref{DecayQ} we have
\begin{align}
\int_I\int|x|^{-b}|Q^2g \Delta Q|\,dx\,d\tau&\lesssim \int_I\int|x|^{-b}|Q^3g|\,dx\,d\tau+\int_I\int |x|^{-2b}|Q^5g|\,dx\,d\tau\\
&\lesssim \int_I\int |x|^{-2b}|Q g|\,dx\,d\tau\\
&\lesssim \int_I \|g(\tau)\|_{L_x^6}\||x|^{-2b}Q\|_{L_x^{6/5}}\,d\tau\\
&\lesssim \int_I \|\nabla g(\tau)\|_{L_x^2}\,d\tau\lesssim e^{-{c}t}.
\end{align}
Moreover, 
%\begin{align}
%\int_I\int|x|^{-b}|g|^3 |\Delta Q|
%&\lesssim \int_I\int |x|^{-2b}|Q|^{1/2}  |g|^3\\
%&\lesssim \int_I \|g^2(\tau)\|_{L_x^{6+}}\|g(\tau)\|_{L_x^6}\||x|^{-2b}|Q|^{1/2}\|_{L_x^{3/2-}}d\tau\\
%&\lesssim \int_I \|g(\tau)\|^2_{L_x^{12+}}\|\nabla g(\tau)\|_{L_x^2}d\tau\\
%&\lesssim \int_I \|\nabla g(\tau)\|^2_{L_x^{12/5+}} \|\nabla g(\tau)\|_{L_x^2}d\tau\\
%&\lesssim \|\|\nabla g\|^2_{L_x^{12/5+}}\|_{L_t^{4-}}\left(\int_I \|\nabla g(\tau)\|^{4/3+}_{L_x^2}d\tau\right)^{3/4-}\\
%&\lesssim e^{-{c}t},
%\end{align}

\begin{equation}\label{xbg3Q}
\begin{aligned}
\int_I\int|x|^{-b}|g|^3 |\Delta Q|\,dx\,d\tau
&\lesssim \int_I\int |x|^{-2b}|Q|^{1/2}  |g|^3\,dx\,d\tau\\
&\lesssim \int_I \||g|^2(\tau)\|_{L_x^{12}}\|g(\tau)\|_{L_x^6}\||x|^{-2b}|Q|^{1/2}\|_{L_x^{\frac{4}{3}}}\,d\tau\\
&\lesssim \int_I \|g(\tau)\|^2_{L_x^{24}}\|\nabla g(\tau)\|_{L_x^2}\,d\tau\\
&\lesssim \int_I \|\nabla g(\tau)\|^2_{L_x^{\frac{8}{3}}} \|\nabla g(\tau)\|_{L_x^2}\,d\tau\\
&\lesssim \|\nabla g\|^2_{L_t^{\frac{16}{3}}L_x^{\frac{8}{3}}}\left(\int_I \|\nabla g(\tau)\|^{\frac{8}{5}}_{L_x^2}\,d\tau\right)^{\frac{5}{8}}\lesssim e^{-{c}t},
\end{aligned}
\end{equation}
following the same arguments as in \eqref{gradg} and using the fact that $(\frac{16}{3},\frac{8}{3})$ is $L^2$-admissible.
% (which implies that is possible to choose a $L^2$-admissible pair in the form $(8-,12/5+)$).

Collecting the above estimates we conclude the proof.
\end{proof}

\begin{rem}\label{RCM23} We can compare the estimates \eqref{gradg} and \eqref{xbg3Q} with those previously obtained in \cite[Proposition 4.1]{CM23}  in order to control the parameter $\dot{\alpha}$.  Using the Cauchy--Schwarz inequality, the authors of \cite{CM23} obtained  
\[
\left| (\nabla g, \nabla \Delta Q) \right| \le \|\nabla g\|_{L_x^2} \, \|\nabla \Delta Q\|_{L_x^2}
\]
and  
\[
\left| (|g|^3, |x|^{-2b} Q)\right| \le \|g\|_{L_x^6}^3 \, \||x|^{-2b} Q\|_{L_x^2}.
\]
Note, however, that $\nabla \Delta Q \in L^2(\mathbb{R}^3)$ and $|x|^{-2b} Q \in L^2(\mathbb{R}^3)$ only when $b < \tfrac12$ and $b < \tfrac34$, respectively. 
\end{rem}

Finally, we state the following result, which is the analogue of \cite[Corollary 4.3]{CM23}, using uniform bounds for $|\alpha(s) - \alpha(t)|$ appearing in Lemma~\ref{Lemalpha} instead of an estimate on $\dot{\alpha}$.

\begin{prop}\label{final}
Let $u:I\times \mathbb{R}^3\to \mathbb{C}$ be a forward-global $H^1$-solution to \eqref{INLS} with initial datum satisfying $M[u_0]=M[Q]$, $E[u_0]=E[Q]$ and $\|\nabla u_0\|_{L_x^2}\neq\|\nabla Q\|_{L_x^2}$. If $\|\nabla u_0\|_{L_x^2}>\|\nabla Q\|_{L_x^2}$, assume in addition that the solution is radial. Then there exist $c>0$ and $\zeta_0\in \mathbb{R}$ such that 
\begin{equation}\label{Limdelta}
\delta(t)\lesssim e^{-ct}\quad  \mbox{for all}\quad t>0
%\lim_{t\to \infty}\delta(t)=0
\end{equation}
and 
\begin{equation}\label{Limzeta_0}
\|u(t)-e^{i(\zeta_0+t)}Q\|_{H^1_x}\lesssim e^{-ct}\quad \mbox{for sufficiently large}\quad t.
\end{equation}
\end{prop}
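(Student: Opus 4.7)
The plan is to combine Lemma~\ref{Lemalpha} with the integrability of $\delta$ from Proposition~\ref{VirialCon} to first conclude that $\delta(t)$ decays exponentially, and then to integrate $\dot\zeta$ to produce a limiting phase.

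First I would fix a time $T$ large enough that the modulation decomposition of Proposition~\ref{ModThe} is available throughout $[T,\infty)$. This is possible because \eqref{intdelta} forces $\delta(t_n)\to 0$ along some sequence $t_n\to\infty$, and a standard bootstrap based on the equivalence $|\alpha|\sim\delta$ together with the Cauchy-type bound from Lemma~\ref{Lemalpha} prevents $\delta$ from re-escaping the neighborhood $\{\delta<\delta_0\}$ once $T$ is sufficiently large.

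On $[T,\infty)$, Lemma~\ref{Lemalpha} shows that $\alpha$ is Cauchy at infinity, so $\alpha_\infty:=\lim_{s\to\infty}\alpha(s)$ exists. Since $|\alpha|\sim\delta$ is integrable by \eqref{intdelta}, $\alpha_\infty$ must vanish. Passing to the limit $s\to\infty$ in \eqref{exp_control+alpha} then gives $|\alpha(t)|\lesssim e^{-ct}$, and the equivalence in \eqref{alpha2} promotes this to \eqref{Limdelta}. With \eqref{Limdelta} in hand, the bound $|\dot\zeta(t)|\lesssim\delta(t)\lesssim e^{-ct}$ coming from \eqref{alpha2} is integrable in time, so $\zeta(t)\to\zeta_0$ for some $\zeta_0\in\R$, with $|\zeta(t)-\zeta_0|\lesssim e^{-ct}$.

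To conclude \eqref{Limzeta_0}, I would write $u(t)=e^{i(\zeta(t)+t)}(Q+g(t))$ and split
\[
u(t)-e^{i(\zeta_0+t)}Q=\bigl(e^{i\zeta(t)}-e^{i\zeta_0}\bigr)e^{it}Q+e^{i(\zeta(t)+t)}g(t),
\]
then use $|e^{i\zeta(t)}-e^{i\zeta_0}|\lesssim|\zeta(t)-\zeta_0|\lesssim e^{-ct}$ together with $\|g(t)\|_{H^1_x}\lesssim\delta(t)\lesssim e^{-ct}$ from \eqref{nablag} and \eqref{Limdelta}.

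The main (mild) obstacle is the initial bootstrap: Lemma~\ref{Lemalpha} only controls $\alpha(s)-\alpha(t)$ when both endpoints lie in the modulation regime $I_0$, so some care is needed to propagate the smallness of $\delta$ forward in time throughout the entire interval $[T,\infty)$, rather than on a possibly disjoint union of good intervals. Once this is secured, the remaining steps are essentially bookkeeping, since the main analytic content has already been packaged into Lemma~\ref{Lemalpha}.
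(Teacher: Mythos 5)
Your argument is correct and follows essentially the same route as the paper: use the integrability of $\delta$ from Proposition~\ref{VirialCon} to find times where $\delta$ (hence $\alpha$) vanishes in the limit, invoke Lemma~\ref{Lemalpha} to upgrade this to $|\alpha(t)|\lesssim e^{-ct}$ and hence \eqref{Limdelta} via \eqref{alpha2}, then integrate $|\dot\zeta|\lesssim\delta$ to obtain the limiting phase $\zeta_0$ and conclude \eqref{Limzeta_0} by the triangle inequality. The bootstrap point you flag is handled in the paper in the same spirit (exponential decay of $\delta$ keeps $t\in I_0$ for all large times), so there is no substantive difference.
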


\begin{proof}
From Proposition \ref{VirialCon}, estimate \eqref{intdelta} holds, and in particular, $\delta(t_n) \to 0$ along some sequence $t_n \to \infty$.  The relation \eqref{alpha2} then implies $\alpha(t_n) \to 0$, and Lemma \ref{Lemalpha} ensures that $|\alpha(t)|\lesssim e^{-ct}$, for all $t>0$.  
A further application of \eqref{alpha2} then gives \eqref{Limdelta}.

To prove \eqref{Limzeta_0}, note first that by \eqref{Limdelta} we have $t \in I_0$ for all sufficiently large times.  
Applying Proposition \ref{ModThe} then yields  
\begin{equation}\label{Limzeta_t}
\|u(t) - e^{i(\zeta(t) + t)} Q\|_{H^1_x} \lesssim \delta(t) \lesssim e^{-ct}
\end{equation}
and  
\[
|\dot{\zeta}(t)| \lesssim \delta(t).
\]
For sufficiently large $s \ge t > 0$, the Fundamental Theorem of Calculus combined with \eqref{intdelta} gives  
\[
|\zeta(s) - \zeta(t)| \lesssim \int_{t}^{s} \delta(\tau) \, d\tau \lesssim e^{-ct}.
\]
Hence, there exists $\zeta_0 \in \mathbb{R}$ such that $\displaystyle\lim_{t \to \infty} \zeta(t) = \zeta_0$.  
Finally, this limit together with \eqref{Limzeta_t} implies \eqref{Limzeta_0}.\end{proof}

With this result in hand, we can follow the analysis of \cite{CM23} and deduce the classification of all mass-energy threshold solutions as in Theorems \ref{thm:Qpm} and \ref{thm:threshold}, now under the relaxed condition $b\in(0,1)$.  This strategy may be adapted to study the classification of threshold dynamics in other dispersive models which involve singular potentials/nonlinearities.

\vspace{0.5cm}
\noindent 
\textbf{Acknowledgments.} L.C. was partially supported by Conselho Nacional de Desenvolvimento Cient\'ifico e Tecnol\'ogico - CNPq grants 307733/2023-8 and 404800/2024-6 and Funda\c{c}\~ao de Amparo a Pesquisa do Estado de Minas Gerais - FAPEMIG grants APQ-03186-24 and APQ-03752-25. J.M. was partially supported by NSF grant DMS-2350225, Simons Foundation grant MPS-TSM-00006622 and CAPES grant 88887.937783/2024-00. L.G.F. was partially supported by Conselho Nacional de Desenvolvimento Cient\'ifico e Tecnol\'ogico - CNPq grant 307323/2023-4. L.C. and L.G.F. were partially supported by Coordena\c{c}\~ao de Aperfei\c{c}oamento de Pessoal de N\'ivel Superior - CAPES grant 88881.974077/2024-01 and CAPES/COFECUB grant 88887.879175/2023-00. This work was completed while L.C. was a visiting scholar at University of Oregon in 2025-26 under the support of Conselho Nacional de Desenvolvimento Cient\'ifico e Tecnol\'ogico - CNPq, for which the author is very grateful as it boosted the energy into the research project. %The authors sincerely thank the anonymous referees for their valuable comments and suggestions, which helped improve the manuscript.

%%%%%%%%%%%%%%%%%%%%%%%%%%%%%%%%%%%%%%%%%%%%%%%%%%%%%%%%%%%%%
%% BIBLIOGRAPHY AND OTHER LISTS
%%%%%%%%%%%%%%%%%%%%%%%%%%%%%%%%%%%%%%%%%%%%%%%%%%%%%%%%%%%%%
%% A small distance to the other stuff in the table of contents (toc)
\addtocontents{toc}{\protect\vspace*{\baselineskip}}

%% The Bibliography
%% ==> You need a file 'literature.bib' for this.
%% ==> You need to run BibTeX for this (Project | Properties... | Uses BibTeX)
%\addcontentsline{toc}{chapter}{Bibliography} %'Bibliography' into toc
%\nocitep{*} %Even non-citepd BibTeX-Entries will be shown.
%\bibliographystyle{alpha} %Style of Bibliography: plain \ apalike \ amsalpha \ ...
%\bibliography{literature} %You need a file 'literature.bib' for this.

%%%%%%%%%%%%%%%%%%%%%%%%%%%%%%%%%%%%%%%%%%%%%%%%%%%%%%%%%%%%%
%% Bibliography
%%%%%%%%%%%%%%%%%%%%%%%%%%%%%%%%%%%%%%%%%%%%%%%%%%%%%%%%%%%%%
%\appendix

\bibliographystyle{abbrvnat}

%\begin{thebibliography}{10}
%
%		\bibitem{CM23}
%		L.~Campos and J.~Murphy.
%		\newblock Threshold solutions for the intercritical inhomogeneous NLS.
%		\newblock \emph{SIAM J. Math. Anal.}, 55\penalty0 (11):\penalty0
%		3807--3843, 2023.
%		
%		\bibitem{GS08}
%F.~Genoud and C.~A. Stuart.
%\newblock Schr\"{o}dinger equations with a spatially decaying nonlinearity:
%  existence and stability of standing waves.
%\newblock {\em Discrete Contin. Dyn. Syst.}, 21(1):137--186, 2008.
%		
%\end{thebibliography}

\newcommand{\Addresses}{{% additional braces for segregating \footnotesize
  \bigskip
  \footnotesize

  L. Campos, \textsc{Department of Mathematics, Universidade Federal de Minas Gerais, Brazil}\par\nopagebreak
  \textit{E-mail address:} \texttt{luccas@ufmg.br}\\
  
L. G. Farah, \textsc{Department of Mathematics, Universidade Federal de Minas Gerais, Brazil}\par\nopagebreak
  \textit{E-mail address:} \texttt{farah@mat.ufmg.br}\\
  
J. Murphy, \textsc{Department of Mathematics, University of Oregon, Eugene, OR, USA}\par\nopagebreak
  \textit{E-mail address:} \texttt{jamu@uoregon.edu}  
}}
\setlength{\parskip}{0pt}
\Addresses
\batchmode
\end{document}